\newtheorem{thm}{Theorem}[section]
\newtheorem{prop}[thm]{Proposition}
\newtheorem{cor}[thm]{Corollary}
\newtheorem{ex}[thm]{Example}
\newtheorem{defin}[thm]{Definition}
\newcommand{\du}{S \! \Join^b \! E}
\title{Minimal genus of a multiple and Frobenius number of a quotient of a numerical semigroup} 
\author{ F. Strazzanti 
}
\date{}
\def\blfootnote{\xdef\@thefnmark{}\@footnotetext}
\begin{document}
\maketitle

\blfootnote{\noindent Preprint of an article published in International Journal of Algebra and Computation}
\blfootnote{ {\bf 25} (2015), no. 6, 1043--1053, DOI: 10.1142/S0218196715500290} 
\blfootnote{$\copyright$ World Scientific Publishing Company} \blfootnote{http://www.worldscientific.com/doi/abs/10.1142/S0218196715500290 }

\begin{abstract}
\noindent
Given two numerical semigroups $S$ and $T$ and a positive integer $d$, $S$ is said to be one over $d$ of $T$ if $S=\{s \in \mathbb{N} \ | \ ds \in T \}$ and in this case $T$ is called a $d$-fold of $S$.
We prove that the minimal genus of the $d$-folds of $S$ is $g + \lceil \frac{(d-1)f}{2} \rceil$, where $g$ and $f$ denote the genus and the Frobenius number of $S$. The case $d=2$ is a problem proposed by Robles-P\'erez, Rosales, and Vasco.
Furthermore, we find the minimal genus of the symmetric doubles of $S$ and study the particular case when $S$ is almost symmetric.
Finally, we study the Frobenius number of the quotient of some families of numerical semigroups. 

\medskip

\noindent MSC: 20M14; 13H10.

\medskip
\noindent {\bf Keywords} Quotient of a numerical semigroup $\cdot$ Genus $\cdot$ \\ 
Symmetric numerical semigroup $\cdot$ Almost symmetric semigroup $\cdot$ \\
Frobenius number $\cdot$ $d$-symmetric semigroup $\cdot$ Numerical duplication.
\end{abstract}

\section{Introduction}

A monoid $S \subseteq \mathbb{N}$ is called {\it numerical semigroup} if $\mathbb{N} \setminus S$ is finite.

Numerical semigroup theory has application in several contexts. 
For example in \cite{RGSGU} Rosales, Garc\'ia-S\'anchez, Garc\'ia-Garc\'ia,
and Urbano-Blanco studied Diophantine inequalities of the form
$ax \mod b \leq cx$ with $a, b$ and $c$ positive integers 
and proved that the set of their nonnegative solutions 
is a numerical semigroup; they called such a semigroup {\it proportionally modular numerical semigroup}.
In order to characterize these semigroups, they introduced the concept of {\it quotient} of a numerical semigroup: 
given two numerical semigroups $S,T$ and a positive integer $d$, we write $S=\frac{T}{d}$ if $S=\{s \in \mathbb{N} \ | \ sd \in T\}$;
we also say that $T$ is a {\it multiple} of $S$.
In \cite[Corollary 3.5]{RR} it is proved that a numerical semigroup is proportionally modular if and only if there exist
two positive integers $a,d$ such that $S=\frac{\langle a,a+1 \rangle}{d}$.

After these works, the notion of quotient of a numerical semigroup has been widely studied. 
In \cite{RRV} Robles-P\'erez, Rosales, and Vasco studied the set of the doubles of a numerical semigroup $S$,
that is the set of the semigroups $T$ such that $S=\frac{T}{2}$. At the end of their article, the authors asked for a formula, that depend on $S$, for computing the minimum of the genus of a double of $S$, where the {\it genus} of a numerical semigroup is the cardinality of its complement in $\mathbb{N}$. 
In this paper we find this formula in a more general frame; indeed in Theorem \ref{main} we prove that 
$$
\min \left\{g(\overline S) \ | \ S=\frac{\overline{S}}{d} \right\}= g(S) + \left\lceil \frac{(d-1)f(S)}{2} \right\rceil,
$$
where $g(\cdot)$ denotes the genus and $f(\cdot)$ is the {\it Frobenius number}, that is the largest integer that is not in the numerical semigroup.

If $R$ is a one-dimensional, noetherian, local, and analytically irreducible domain, it is possible to associate to it a numerical semigroup $S$. In this case the properties of $R$ can be translated in numerical properties and conversely;
a famous result in this context is that $R$ is Gorenstein if and only if $S$ is symmetric (see Chapter II of \cite{BDF}). For this reason symmetric numerical semigroups are extensively studied and in this context, in \cite{BF}, Barucci and Fr\"{o}berg introduced a generalization of symmetric semigroups and Gorenstein rings that they called, respectively, {\it almost symmetric numerical semigroups} and {\it almost Gorenstein rings}.

Rosales and Garc\'ia-S\'anchez proved in \cite{RG} that, given a numerical semigroup $S$, there exist infinitely many symmetric doubles of $S$. We also find the minimal genus of the semigroups in this family and construct the unique semigroup that has this genus; to do this, we use the {\it numerical duplication} of a numerical semigroup defined in \cite{DS}. The formula is more satisfactory when $S$ is almost symmetric.

A numerical semigroup is said to be $d${\it -symmetric } if for all integers $n \in \mathbb Z$, whenever $d$ divides $n$, either $n$ or $f(S)-n$ is in $S$. In the last section we prove that, if $S$ is $d$-symmetric, then 
$$
f \left( \frac{S}{d} \right) = \frac{f(S)-x}{d}, 
$$ 
where $x$ is the minimum integer of $S$ such that $x \equiv f(S) \mod d$. As a corollary, this formula holds for symmetric and pseudo-symmetric numerical semigroups.
Unfortunately, in general, it is difficult to find $x$, but, when $S$ is symmetric and $d=2$, it is simply the minimum odd generator of $S$.
We also give a description of $f(\frac{S}{5})$ when $S$ is generated by two consecutive elements.
This research is motivated by the characterization of proportionally modular numerical semigroups and it is an open question proposed first in \cite{RG1} and after in \cite{DGR}. 

Finally, another simple formula is found for some particular almost symmetric semigroups (cfr. Corollary \ref{quoziente simmetrico}).
\vspace{1em}

The paper is organized as follows. In the second section we find the formula for the minimal genus of a multiple of a numerical semigroup $S$ (see Theorem \ref{main}) and in Corollary \ref{almost symmetric} we characterize the semigroup with minimal genus when $S$ is almost symmetric. In Section $3$ we find the minimal genus of a symmetric double of $S$ and construct the unique semigroup that has this genus (see Proposition \ref{symmetric}); moreover, in Corollary \ref{almost symmetric 2} we study the particular case of $S$ almost symmetric. Finally, in the last section, we prove the formula for the Frobenius number of the quotient of a $d$-symmetric numerical semigroup (see Theorem \ref{Frobenius quoziente}) and we give corollaries in some particular cases.

\section{Minimal genus of a multiple of a numerical semigroup}

Given a numerical semigroup $S$, we call the elements of $\mathbb N \setminus S$ {\it gaps} and their cardinality $g(S)$ is the {\it genus} of $S$; moreover the maximum integer that is not in $S$ is called {\it Frobenius number} of $S$ and it is denoted by $f(S)$.

Let $S$ and $T$ be two numerical semigroups. We say that $S$ is one over $d$ of $T$ or that $T$ is a $d$-fold of $S$,
if $S=\{s \in \mathbb{N} \ | \ ds \in T\}$. If $d=2$ we say that $S$ is one half of $T$ or that 
$T$ is a double of $S$ and we denote the set of doubles of $S$ with ${\rm D}(S)$.
Robles-P\'erez, Rosales, and Vasco proposed the following problem:
\vspace{1em}

\textbf{Problem} \cite[Problem 42]{RRV} Let $S$ be a numerical semigroup. Find a \\
formula, that depend on $S$, for computing 
$\min \{g(\overline S) \ | \ \overline{S} \in {\rm D}(S) \}$.
\vspace{1em}

In this paper we answer to this question in a more general frame; 
indeed in the next theorem we find a formula for the minimal genus of a $d$-fold of $S$.
We set $d \cdot X = \{dx \ | \ x \in X \}$.

\begin{thm} \label{main}
Let $S$ be a numerical semigroup with Frobenius number $f$ and let $d \geq 2$ be an integer. Then 
$$
\min \left\{g(\overline S) \ | \ S=\frac{\overline{S}}{d} \right\}= g(S) + \left\lceil \frac{(d-1)f}{2} \right\rceil.
$$
\end{thm}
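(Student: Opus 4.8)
The plan is to prove the two inequalities separately. For the lower bound, I would analyze what a $d$-fold $\overline{S}$ of $S$ must look like. Since $S=\overline{S}/d$, for every $s\in S$ we have $ds\in\overline S$, and for every gap $s\notin S$ we have $ds\notin\overline S$; so the complement $\mathbb N\setminus\overline S$ contains $\{ds : s\notin S\}$, which already contributes $g(S)$ gaps. It remains to count the gaps of $\overline S$ that are \emph{not} multiples of $d$. The key observation is: if $a$ is not a multiple of $d$ and $a\in\overline S$, then writing the residue decomposition, for each residue class $r\not\equiv 0\pmod d$ look at the elements $a\equiv r\pmod d$ with $a\le df+(d-1)$ or so — roughly, I want to pair up $a$ with $df-a$ (or a similar involution). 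The point is that $a$ and $df-a$ lie in the same nonzero residue class modulo $d$ when... actually I would pair $a$ with $\max\{$relevant bound$\}-a$; since $\overline S$ contains $d\cdot S$ and $ds+\overline S\subseteq\overline S$ is false in general — but $\overline S$ is closed under addition, and $d(f+1)+\mathbb N\subseteq\overline S$ because $f+1,f+2,\dots\in S$. The cleanest route: for each non-multiple-of-$d$ residue class $r$, among the integers $a\equiv r\pmod d$ with $0<a<df$, at most half can lie in $\overline S$, because $a\in\overline S\Rightarrow d f - a\notin\overline S$? That implication needs $d f$ to behave like a ``Frobenius-type'' bound for $\overline S$ restricted to that class — which would follow if $\overline S$ were symmetric, but it need not be. So instead I would argue: $a\in\overline S$ and $a'\in\overline S$ with $a+a'=$ some fixed multiple of $d$ lying in $d\cdot S$... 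Let me instead count directly: the non-multiples of $d$ that are $>df$ and $\le$ something are automatically forced, and below $df$ there are exactly $(d-1)f$ integers that are non-multiples of $d$ in the range $[1,df]$ (namely $f$ in each of the $d-1$ nonzero classes), and I claim at most half of each class's worth, i.e.\ at most $\lfloor(d-1)f/2\rfloor$ such integers, can belong to $\overline S$ — no wait, I need the complement to be \emph{large}, so I want: \emph{at least} $\lceil(d-1)f/2\rceil$ non-multiples of $d$ are gaps of $\overline S$. This follows from an involution $a\mapsto c-a$ on non-multiples of $d$ below $c$ for a suitable $c$ congruent to $0$ mod $d$, with $c\in d\cdot S$ forcing that $a$ and $c-a$ are not both in $\overline S$ (else... hmm, that's not automatic either).

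I think the honest approach for the lower bound is: restrict attention to the Apéry-type structure. Write $\overline S$; for a nonzero residue $r$ mod $d$, let $w_r$ be the smallest element of $\overline S$ in class $r$. Then the gaps of $\overline S$ in class $r$ are exactly $\{a\equiv r : a<w_r\}$, which has cardinality $(w_r-r)/d$. On the other hand, since $S=\overline S/d$, the condition on multiples of $d$ is already pinned down, and the freedom is entirely in the $w_r$; the constraint is that $\overline S$ be a semigroup, i.e.\ $w_r+w_{r'}\ge w_{r+r'}$ in the appropriate sense, and crucially $w_r> df$ is \emph{forced} for at least ``half'' the classes by a pairing $r\leftrightarrow (\text{something})$, because if both $w_r\le df$ and $w_{d-r}\le df$ then $w_r+w_{d-r}$ would be a multiple of $d$ that is $\le 2df$ but could be $>$ anything we want... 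Actually the right statement: $w_r+w_{d-r}\in\overline S$ is a multiple of $d$, hence equals $ds$ for some $s\in S$, hence $w_r+w_{d-r}\ge d(f+1)$ OR it lies in $d\cdot S$ at a smaller value — but any multiple of $d$ in $\overline S$ is $d s$ with $s\in S$, and $s$ can be small. Hmm — so that gives $w_r+w_{d-r}=ds$ for some $s\in S$, but doesn't force it large. Let me reconsider: I'd use instead that $w_r + w_r' \ge w_{r+r'}$ where indices are mod $d$, so $\sum_{r=1}^{d-1} w_r$... the total genus contribution from nonzero classes is $\sum_{r=1}^{d-1}(w_r-r)/d$ and I want to minimize this subject to the semigroup constraints plus $S=\overline S/d$; a Lagrangian/combinatorial optimization shows the minimum is $\lceil(d-1)f/2\rceil$, achieved by a ``balanced'' choice. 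The main obstacle, and where I'd spend most effort, is making this optimization rigorous: showing no semigroup $\overline S$ with $\overline S/d=S$ can do better than the balanced configuration.

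For the upper bound I would \emph{construct} an explicit $d$-fold achieving genus $g(S)+\lceil(d-1)f/2\rceil$. The natural candidate: take $\overline S = d\cdot S \cup \{a\in\mathbb N : a> \frac{(d-1)f}{2}+\text{(adjustment)},\ a\not\equiv 0\}\cup(\text{forced large multiples})$ — more precisely, set $\overline S=\{n\in\mathbb N : n\ge m\}\cup\{ds : s\in S\}\cup\{\text{small non-multiples that are sums}\}$, choosing the threshold $m$ for the non-multiples of $d$ as small as the semigroup axiom and the constraint $\overline S/d=S$ allow. Concretely I expect $\overline S = d\cdot S \ \cup\ \{n\notin d\mathbb N : n\ge c\}$ works for $c$ the least integer $\ge \lceil(d-1)f/2\rceil\cdot\frac{d}{d-1}$-ish congruent appropriately, and one checks (i) it is a numerical semigroup: closed under addition because a non-multiple of $d$ that is $\ge c$ plus anything nonnegative stays $\ge c$, and $d s+ (\text{non-multiple}\ge c)$ is a non-multiple $\ge c$, and $ds+ds'=d(s+s')\in d\cdot S$; (ii) $\overline S/d = S$: $n$ with $dn\in\overline S$ means $dn\in d\cdot S$ (since $dn$ is a multiple of $d$), i.e.\ $n\in S$; (iii) the genus count: $g(S)$ from the multiples, and $c$ must be chosen so that exactly $\lceil(d-1)f/2\rceil$ non-multiples of $d$ are missing — this fixes $c$ and then one verifies the semigroup property still holds with this $c$, which is the one routine computation I'd actually carry out. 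Comparing with the lower bound finishes the proof.
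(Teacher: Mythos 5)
Your lower bound is where the genuine gap lies, and you walked right past the key fact. Since $f\notin S$ and $S=\overline S/d$, the multiple $df$ is a gap of $\overline S$. Hence for any integer $x$, the numbers $x$ and $df-x$ cannot both lie in $\overline S$: otherwise their sum $df$ would be in $\overline S$ by closure under addition. This is exactly the implication you considered (``$a\in\overline S\Rightarrow df-a\notin\overline S$'') and then dismissed as requiring $\overline S$ to be symmetric; it requires no symmetry at all, only that $df$ is a gap together with the semigroup axiom. With it the count is immediate (and is the paper's argument): every non-multiple $x$ of $d$ with $0<x\le df/2$ contributes a gap of $\overline S$, namely $x$ or $df-x$; these gaps are pairwise distinct; and there are $\lfloor df/2\rfloor-\lfloor f/2\rfloor=\lceil (d-1)f/2\rceil$ such $x$. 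Added to the $g(S)$ gaps of $\overline S$ that are multiples of $d$, this gives the lower bound. Your fallback route via the minimal elements $w_r$ of $\overline S$ in the nonzero residue classes and a ``combinatorial optimization'' is, by your own admission, not carried out, and your own observation that $w_r+w_{d-r}=ds$ with $s\in S$ possibly small shows you had no workable substitute; as written, the inequality $g(\overline S)\ge g(S)+\lceil (d-1)f/2\rceil$ is unproved in your proposal.

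The upper bound construction you propose is essentially the paper's semigroup $T=d\cdot S\cup\{b+i \ | \ i\in\mathbb N,\ d\nmid b+i\}$ with $b=\lfloor df/2\rfloor+1$, and fixing the threshold by demanding exactly $\lceil (d-1)f/2\rceil$ missing non-multiples of $d$ does produce this $b$. But your closure check omits the only delicate case: two non-multiples of $d$, both $\ge b$, can sum to a multiple of $d$, and that sum must then lie in $d\cdot S$. This is precisely why the threshold must exceed $df/2$: such a sum is then $>df$, hence equals $ds$ with $s>f$, so $s\in S$. Your sketch (``a non-multiple $\ge c$ plus anything nonnegative stays $\ge c$'') would not catch this constraint, which is the one that actually determines $b$; once you add that case, and the trivial checks that $T/d=S$ and that the genus is $g(S)+\lfloor df/2\rfloor-\lfloor f/2\rfloor$, the upper half is complete.
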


\begin{proof}
We first note that $\lceil \frac{(d-1)f}{2} \rceil = \lfloor \frac{df}{2} \rfloor - \lfloor \frac{f}{2} \rfloor$.
Let $\overline{S}$ be a $d$-fold of $S$. 
It is easy to see that $\overline{S}$ has exactly $g(S)$ gaps that are multiples of $d$. 
In order to count the other gaps, we note that $df \notin \overline{S}$ because $f \notin S$.
Consider $0<x \leq df/2$, $x$ not a multiple of $d$. 
If $x$ is not a gap of $\overline{S}$, then $df-x \notin \overline{S}$, since $df$ is not in $\overline{S}$.
All these gaps are different and, since there are $\lfloor \frac{f}{2} \rfloor$ multiples of $d$ smaller than $df/2$, 
they are $\lfloor \frac{df}{2} \rfloor - \lfloor \frac{f}{2} \rfloor$.
Hence we get $g(\overline{S}) \geq g(S) + \lfloor \frac{df}{2} \rfloor - \lfloor \frac{f}{2} \rfloor = g(S) + \lceil \frac{(d-1)f}{2} \rceil$.

To prove the theorem we only need to exhibit a $d$-fold of $S$ with genus 
$g(S) + \lfloor \frac{df}{2} \rfloor - \lfloor \frac{f}{2} \rfloor$. 
It easy to see that such a semigroup is
$$
T= d \cdot S \cup \{b + i \ | \ i \in \mathbb{N}, \ d \nmid b+i\},
$$
where $b= \lfloor \frac{df}{2} \rfloor +1$.
\end{proof}

In the next example we will see that there can be several numerical semigroups
whose genus equals the minimum obtained in the previous theorem. 
However, by its proof, it is clear that all these semigroups have Frobenius number $df$;
consequently there are finitely many such semigroups.
If $x_1, \dots, x_r$ are integers, we denote by $\{x_1, \dots, x_r \rightarrow \}$ the set 
$\{x_1, \dots, x_r\} \cup \{x_r+i \ | \ i \in \mathbb{N} \}$.

\begin{ex} \rm
Consider $S=\{0,3,6 \rightarrow\}$ and set $d=3$.
According to the proof of the previous theorem, a ``triple'' of $S$
with minimal genus is \\
$\{0,8,9,10,11,13,14,16 \rightarrow\}$
that has genus $g(S) + \lceil \frac{(3-1)5}{2} \rceil = 4+5=9$.

Notice that $\{0,7,9,10,11,13,14,16 \rightarrow\}$
is another triple of $S$ with minimal genus.
\end{ex}

The integers $x \notin S$ such that $x+s \in S$ for all $s \in S \setminus \{0\}$ are called {\it pseudo-Frobenius numbers} and we denote by ${\rm PF}(S)$ the set of the pseudo-Frobenius numbers of $S$. The integer $t(S)=|{\rm PF}(S)|$ is the 
{\it type} of $S$. Moreover we call a numerical semigroup $S$ {\it almost symmetric} if ${\rm L}(S) \subseteq {\rm PF}(S)$,
where ${\rm L}(S)=\{x \in \mathbb{Z} \setminus S \ | \ f(S)-x \notin S \}$. 

Given a numerical semigroup $S$, it is always true (see \cite[Proposition 2.2]{N}) that
$$
g(S) \geq \frac{f(S)+t(S)}{2}
$$
and it is well-known that the equality holds if and only if $S$ is {\it almost symmetric}.

\begin{prop}
Let $S$ be a numerical semigroup with Frobenius number $f$ and
let $\overline{S}$ be a $d$-fold of $S$ with minimal genus. \\
1) If $f$ is even or $d$ is odd, $\overline{S}$ has type $t(S)$; \\
2) If $f$ is odd and $d$ is even, $\overline{S}$ has type either $t(S)$ or $t(S)+1$. Moreover if $S$ is almost symmetric, $\overline{S}$ has type $t(S)+1$. 

\end{prop}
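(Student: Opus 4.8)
The plan is to compute ${\rm PF}(\overline S)$ explicitly in terms of ${\rm PF}(S)$. Recall from the proof of Theorem~\ref{main} that a minimal $d$‑fold $\overline S$ has Frobenius number $df$, that its gaps which are multiples of $d$ are exactly the $dx$ with $x\in\N\setminus S$, and that for each non‑multiple of $d$, $x$, with $0<x<df/2$ exactly one of $x$ and $df-x$ is a gap of $\overline S$; moreover, when $f$ is odd and $d$ is even, $df/2$ is a non‑multiple of $d$ and is itself a gap of $\overline S$. Set $Z=\{df/2\}$ in that last case and $Z=\emptyset$ otherwise. I will prove
\[
d\cdot{\rm PF}(S)\ \subseteq\ {\rm PF}(\overline S)\ \subseteq\ d\cdot{\rm PF}(S)\cup Z .
\]
This gives the proposition at once: in case 1) $Z=\emptyset$, so ${\rm PF}(\overline S)=d\cdot{\rm PF}(S)$ and $t(\overline S)=t(S)$; in case 2), $t(\overline S)=t(S)+|{\rm PF}(\overline S)\cap Z|\in\{t(S),t(S)+1\}$, the value being $t(S)+1$ exactly when $df/2\in{\rm PF}(\overline S)$.

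For the second inclusion let $z\in{\rm PF}(\overline S)$. If $z$ is not a multiple of $d$ and $z\neq df/2$, then, since exactly one of $z,df-z$ is a gap and $z$ is one, we get $df-z\in\overline S\setminus\{0\}$; as $z+(df-z)=df\notin\overline S$ this contradicts $z\in{\rm PF}(\overline S)$. Hence $z\in Z$ or $z=dp$ is a multiple of $d$; in the latter case $p\notin S$, and for every $s\in S\setminus\{0\}$ we have $ds\in\overline S\setminus\{0\}$, so $d(p+s)=dp+ds\in\overline S$, whence $p+s\in S$ and $p\in{\rm PF}(S)$.

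For the first inclusion take $p\in{\rm PF}(S)$ and suppose $dp\notin{\rm PF}(\overline S)$. Then $dp$ is a gap of $\overline S$ that is not maximal for $a\preceq b\iff b-a\in\overline S$, so there is $w\in{\rm PF}(\overline S)$ with $w-dp\in\overline S\setminus\{0\}$; by the second inclusion either $w=dp''$ with $p''\in{\rm PF}(S)$ or $w=df/2$. In the first case $w-dp=d(p''-p)$ gives $p''-p\in S\setminus\{0\}$, hence $p''=p+(p''-p)\in S$, contradicting $p''\in{\rm PF}(S)$. In the second case set $v:=df/2-dp\in\overline S\setminus\{0\}$; then $2v=d(f-2p)\in\overline S$, and since $\overline S$ is a $d$‑fold of $S$ and $f-2p>0$ this forces $f-2p\in S\setminus\{0\}$, hence $f-p=p+(f-2p)\in S$ and then $f=p+(f-p)\in S$, which is impossible. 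Thus $dp\in{\rm PF}(\overline S)$. This establishes the displayed inclusions, hence part 1) and the first assertion of part 2).

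Finally, suppose $S$ is almost symmetric and, for a contradiction, $df/2\notin{\rm PF}(\overline S)$ (so $f$ is odd and $d$ even). I first note that ${\rm PF}(S)\setminus\{f\}$ is invariant under $q\mapsto f-q$: one always has ${\rm PF}(S)\setminus\{f\}\subseteq{\rm L}(S)$ (if $p\in{\rm PF}(S)$, $p<f$, and $f-p\in S$ then $f=p+(f-p)\in S$, absurd), the set ${\rm L}(S)$ is stable under $q\mapsto f-q$ by its definition, and almost symmetry says ${\rm L}(S)\subseteq{\rm PF}(S)$, so ${\rm L}(S)={\rm PF}(S)\setminus\{f\}$. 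Now $df/2$ is a gap of $\overline S$ which is not maximal for $\preceq$, so $w-df/2\in\overline S\setminus\{0\}$ for some $w\in{\rm PF}(\overline S)$; by the second inclusion $w=dp''$ with $p''\in{\rm PF}(S)$, and $dp''-df/2\in\overline S\setminus\{0\}$ forces $f/2<p''<f$ (it would equal $df/2\notin\overline S$ if $p''=f$). Then $p:=f-p''\in{\rm PF}(S)$, $0<p<f/2$, and $df/2-dp=dp''-df/2\in\overline S\setminus\{0\}$; but $dp\in{\rm PF}(\overline S)$ by the first inclusion, so $dp+(df/2-dp)=df/2\in\overline S$, contradicting that $df/2$ is a gap. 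Hence $df/2\in{\rm PF}(\overline S)$ and $t(\overline S)=t(S)+1$. I expect the main difficulty to be the first inclusion — in particular, ruling out that $dp$ is dominated by the exceptional gap $df/2$, which uses both closure of $\overline S$ under addition and the fact that $\overline S$ is genuinely a $d$‑fold of $S$ — together with the symmetry of ${\rm PF}(S)\setminus\{f\}$ needed in the almost‑symmetric case.
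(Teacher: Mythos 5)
Your proof is correct and follows essentially the same route as the paper: both identify ${\rm PF}(\overline S)$ as $d\cdot{\rm PF}(S)$ together with, possibly, the exceptional element $df/2$, and both use ${\rm L}(S)\subseteq{\rm PF}(S)$ to force $df/2\in{\rm PF}(\overline S)$ in the almost symmetric case. The only real difference is mechanical: you obtain the inclusion $d\cdot{\rm PF}(S)\subseteq{\rm PF}(\overline S)$ and the almost symmetric conclusion via the standard fact that every gap is dominated by a pseudo-Frobenius number, whereas the paper verifies the pseudo-Frobenius condition directly.
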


\begin{proof}
By the proof of Theorem \ref{main} 
it is clear that if $x \notin \overline{S}$ is not a multiple of $d$,
then $df-x \in \overline{S}$ or $x=df/2$. 
Consequently the pseudo-Frobenius numbers of $\overline{S}$, 
different from $df/2$, are multiples of $d$. 
It is easy to see that this implies $t(\overline{S}) \leq t(S)$, 
if $f$ is even or $d$ is odd, and $t(\overline{S}) \leq t(S)+1$ otherwise,
because in the first case $df/2$ is not an integer or it is a multiple of $d$.

Let $x$ be a pseudo-Frobenius number of $S$, we claim that $dx \in {\rm PF}(\overline S)$.
Clearly if $dy \in \overline{S}$, then $dx+dy=d(x+y) \in \overline{S}$, since $x \in {\rm PF}(S)$. 
Let $z$ be a nonzero element of $\overline{S}$ that is not a multiple of $d$ and suppose that $dx+z \notin \overline S$.
We have already noted that $df - dx -z=d(f-x)-z \in \overline{S}$, then $d(f-x)=d(f-x)-z+z \in \overline{S}$ 
and this implies $f-x \in S$; this is a contradiction because $x \in {\rm PF}(S)$ and then $x+(f-x)=f \in S$. 
Hence $t(\overline{S}) \geq t(S)$ and this proves 1) and the first part of 2).

We only need to prove that if $f$ is odd, $d$ is even, and $S$ is almost symmetric,
then $df/2$ is a pseudo-Frobenius number of $\overline{S}$.
Suppose that there exists $x \in \overline{S} \setminus \{0\}$ such that $df/2 +x \notin \overline S$.
If $df-(df/2+x)=df/2-x \in \overline S$, then 
$df/2 \in \overline S$ and thus $df \in \overline{S}$; contradiction. 
If $df/2-x \notin \overline S$, since it is a multiple of $d$ by the beginning of the proof, we have $\frac{df/2-x}{d} \in {\rm L}(S) \subseteq {\rm PF}(S)$ 
that implies $df/2-x \in {\rm PF}(\overline S)$ by the first part of the proof
and thus $df/2=df/2-x+x \in \overline S$; contradiction. 
\end{proof}

If $S$ is not almost symmetric, $f$ is odd, and $d$ is even, the type of $\overline{S}$ can be $t(S)$ as shows the following example. However we note that in this case the semigroup $T$ constructed in the proof of Theorem \ref{main} always has type $t(S)+1$.

\begin{ex} \rm
Consider $S=\{0,5,6,7,10 \rightarrow \}$ that is not almost symmetric. 
We note that the numerical semigroups
$$
T=\{0,19,20,21,22,23,24,25,26,27,28,29,30,31,33,34,35,37 \rightarrow\},
$$
$$
T'=\{0,14,19,20,21,23,24,25,26,27,28,29,30,31,33,34,35,37 \rightarrow\}
$$
are $4$-folds of $S$ with minimal genus. 
Since $18$ is a pseudo-Frobenius number of $T$ but not of $T'$, 
we have $t(T)=3=t(S)+1$ and $t(T')=2=t(S)$.

\end{ex}

\begin{cor} \label{almost symmetric}
Let $S$ be an almost symmetric numerical semigroup with Frobenius number $f$ and let $\overline{S}$ be a $d$-fold of $S$.
Set $t=t(S)$ if $f$ is even or $d$ is odd, and set $t=t(S)+1$ otherwise.
Then $\overline{S}$ has minimal genus if and only if it is almost symmetric with type $t$.
\end{cor}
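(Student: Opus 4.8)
The plan is to rewrite the minimal genus supplied by Theorem \ref{main} in terms of the type and then read everything through the inequality $g(W)\ge\frac{f(W)+t(W)}{2}$, whose equality case characterises almost symmetry. Since $S$ is almost symmetric, $g(S)=\frac{f+t(S)}{2}$. Substituting this into $g(S)+\lceil\frac{(d-1)f}{2}\rceil$ and distinguishing the two cases in the definition of $t$ — if $f$ is even or $d$ is odd then $(d-1)f$ is even, $t=t(S)$, and $\lceil\frac{(d-1)f}{2}\rceil=\frac{(d-1)f}{2}$; if $f$ is odd and $d$ is even then $(d-1)f$ is odd, $t=t(S)+1$, and $\lceil\frac{(d-1)f}{2}\rceil=\frac{(d-1)f+1}{2}$ — one checks in both cases that
$$
g(S)+\left\lceil\frac{(d-1)f}{2}\right\rceil=\frac{df+t}{2}.
$$
So the assertion to prove becomes: $g(\overline S)=\frac{df+t}{2}$ if and only if $\overline S$ is almost symmetric of type $t$.

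For the ``only if'' part, suppose $g(\overline S)=\frac{df+t}{2}$. By the remark following Theorem \ref{main}, $f(\overline S)=df$, and by the preceding Proposition (using that $S$ is almost symmetric) $t(\overline S)=t$. Hence $g(\overline S)=\frac{f(\overline S)+t(\overline S)}{2}$, and the equality case of $g(W)\ge\frac{f(W)+t(W)}{2}$ forces $\overline S$ to be almost symmetric; its type is $t$ by the previous line.

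For the ``if'' part, suppose $\overline S$ is almost symmetric of type $t$, so that $g(\overline S)=\frac{f(\overline S)+t}{2}$. Theorem \ref{main} together with the reduction above gives $g(\overline S)\ge\frac{df+t}{2}$, hence $f(\overline S)\ge df$; since $df\notin\overline S$ it suffices to show $f(\overline S)=df$, i.e.\ that $\overline S$ has no gap larger than $df$. The key observation is that the gaps of $\overline S$ divisible by $d$ are exactly $d\cdot(\mathbb N\setminus S)$, so the largest of them is $df$ and any gap of $\overline S$ exceeding $df$ is not a multiple of $d$. When $d=2$ this already finishes the proof: writing $g(\overline S)-g(S)$ as the number of odd gaps of $\overline S$ and comparing with $g(\overline S)=\frac{f(\overline S)+t}{2}$, $g(S)=\frac{f+t(S)}{2}$ forces $f(\overline S)$ to be even, hence equal to the largest even gap $2f=df$, and then $g(\overline S)=\frac{df+t}{2}$ is precisely the minimum. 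For general $d$ one has to combine the almost-symmetric structure of ${\rm PF}(\overline S)$ — the pairing $x\leftrightarrow f(\overline S)-x$ on its pseudo-Frobenius numbers and on its gaps — with the divisibility bookkeeping on multiples of $d$ to exclude a gap above $df$, and then conclude as before.

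I expect this last step of the ``if'' direction to be the main obstacle: ruling out gaps of $\overline S$ beyond $df$ for an almost-symmetric $d$-fold of type $t$. Everything else — the rewriting of the minimal genus and the ``only if'' direction — is bookkeeping on top of Theorem \ref{main}, the preceding Proposition, and the standard genus--type inequality with its equality case.
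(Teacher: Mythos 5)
Your reduction of the problem and your ``only if'' direction coincide with the paper's own argument: the paper likewise assumes $t(\overline S)=t$ via the preceding Proposition, uses the genus--type inequality $g(\overline S)\ge \frac{f(\overline S)+t(\overline S)}{2}$ of \cite{N} together with $2g(S)=f+t(S)$, and performs the same arithmetic showing that the minimum of Theorem \ref{main} equals $\frac{df+t}{2}$. Your $d=2$ argument for the ``if'' direction is also correct (when $d=2$ the number $t$ is even, almost symmetry gives $f(\overline S)\equiv t \pmod 2$, and an even Frobenius number of $\overline S$ must be twice a gap of $S$, hence equal to $2f$). What remains is the step you yourself flag: for general $d$ you only say one ``has to combine'' the structure of ${\rm PF}(\overline S)$ with divisibility bookkeeping to exclude a gap above $df$; that is not a proof, so as written your proposal is incomplete.

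You should know, however, that this missing step is not an oversight you could have repaired: the paper's own proof skips it silently, reading equality in $g(\overline S)\ge\frac{df+t}{2}$ as equivalent to almost symmetry, whereas the chain is really $g(\overline S)\ge\frac{f(\overline S)+t}{2}\ge\frac{df+t}{2}$ and the second inequality can be strict for an almost symmetric $\overline S$ of type $t$. In fact the ``if'' implication fails for $d\ge 3$ as stated: take $S=\langle 2,3\rangle$ (symmetric, $f=1$, $t(S)=1$) and $d=3$, so $t=1$; then $\overline S=\langle 2,9\rangle$ is a symmetric $3$-fold of $S$ (its multiples of $3$ are $0,6,9,12,\dots$), hence almost symmetric of type $t$, yet $g(\overline S)=4$ while the minimum of Theorem \ref{main} is $2$. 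More structurally, Swanson's theorem cited in the paper produces infinitely many symmetric $d$-folds of $S$, which cannot all attain the minimum; and the pseudo-symmetric $4$-fold $\{0,3,6,8,9,11\rightarrow\}$ of $\langle 2,3\rangle$ (genus $6$ against the minimum $3$) shows the parity rescue really is special to $d=2$. The fact you would need, $f(\overline S)=df$, is precisely what the paper afterwards records as Corollary \ref{quoziente simmetrico}, deduced from this corollary, so it is not available to you --- and it fails in the same examples. The statement becomes correct if one restricts to $d=2$ (your argument) or adds $f(\overline S)=df(S)$ as a hypothesis in the ``if'' direction; so the obstacle you identified is genuine, and your write-up, though incomplete, locates exactly the point where the paper's proof is deficient.
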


\begin{proof}

In light of the previous proposition we can assume that $\overline{S}$ has type $t$. 
Suppose first that $f$ is even or $d$ is odd. Since $S$ is almost symmetric, $t(S)=2g(S)-f$; then 
$$
g(\overline{S}) \geq \frac{df+t}{2} = \frac{df+2g(S)-f}{2} = g(S)+\frac{(d-1)f}{2}=g(S)+\left\lceil \frac{(d-1)f}{2}\right\rceil
$$
and we have the equality, i.e. $\overline{S}$ has minimal genus, if and only if $\overline{S}$ is almost symmetric. In the second case we can use the same argument.
\end{proof}

Since a $d$-fold of $S$ with minimal genus has Frobenius number $df(S)$, we get the next corollary.

\begin{cor} \label{quoziente simmetrico}
Let $S, \overline{S}$, and $t$ be as in the previous corollary. If $\overline S$ is almost symmetric with type $t$, then $d$ divides $f(\overline{S})$ and $f(S)=f(\overline{S})/d$.
\end{cor}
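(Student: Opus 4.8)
The plan is to reduce everything to results already established and, in particular, to the proof of Theorem \ref{main}. First I would invoke Corollary \ref{almost symmetric}: since $S$ is almost symmetric and $\overline{S}$ is a $d$-fold of $S$ that is almost symmetric with type $t$, that corollary yields at once that $\overline{S}$ has minimal genus, i.e. $g(\overline{S}) = g(S) + \lceil (d-1)f/2 \rceil$. So the task is really to show that a $d$-fold of $S$ of minimal genus necessarily has Frobenius number $df$; this is the content of the remark following Theorem \ref{main}, and it is the only point that needs to be spelled out.

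To see this, I would argue as follows. Because $\overline{S}$ is a $d$-fold of $S$, for every $s \in \mathbb{N}$ we have $s \in S$ if and only if $ds \in \overline{S}$; hence the gaps of $\overline{S}$ that are multiples of $d$ are exactly $\{d x \mid x \in \mathbb{N} \setminus S\}$, there are $g(S)$ of them, and they are all at most $df$. On the other hand $df \notin \overline{S}$ since $f \notin S$. Now suppose $\overline{S}$ had a gap $y > df$. Then $y$ cannot be a multiple of $d$, since otherwise $y/d > f$ would be a gap of $S$, which is impossible; thus $y$ is distinct from the $g(S)$ multiple-of-$d$ gaps and also from the $\lfloor df/2 \rfloor - \lfloor f/2 \rfloor$ gaps in $(0, df]$ produced in the proof of Theorem \ref{main}. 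Counting $y$ in addition to those would give $g(\overline{S}) > g(S) + \lfloor df/2 \rfloor - \lfloor f/2 \rfloor = g(S) + \lceil (d-1)f/2 \rceil$, contradicting minimality. Therefore $df$ is the largest gap of $\overline{S}$, that is, $f(\overline{S}) = df$.

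The conclusion is then immediate: $f(\overline{S}) = d f$ gives both $d \mid f(\overline{S})$ and $f(S) = f = f(\overline{S})/d$. I do not expect any genuine obstacle here; the only step requiring a little care is the middle paragraph, namely observing that minimality of the genus forces the Frobenius number to be exactly $df$ rather than possibly larger, and this is already implicit in the proof of Theorem \ref{main}.
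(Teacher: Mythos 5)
Your proposal is correct and takes essentially the same route as the paper: Corollary \ref{almost symmetric} shows $\overline{S}$ has minimal genus, and the observation (stated as a remark after Theorem \ref{main} and used without proof right before the corollary) that a $d$-fold of minimal genus has Frobenius number $df(S)$ then gives the claim. Your middle paragraph merely spells out, correctly, the counting argument the paper leaves implicit in that remark.
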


In the last section we will study the Frobenius number of the quotient for other families of numerical semigroups.

\section{Genus of a symmetric double of a numerical semigroup}

A numerical semigroup $S$ is called {\it symmetric} if it has type one.
It is well-known that $S$ is symmetric if and only if $g(S)=\frac{f(S)+1}{2}$ (see \cite[Corollary 4.5]{RG1})
and this implies that a symmetric numerical semigroup is almost symmetric.

In \cite{RG} it is proved that every numerical semigroup is one half of infinitely many symmetric semigroups.
We set 
$$
\mathcal{D}(S)=\left\{\overline{S} \text{ \ symmetric numerical semigroup} \ | \ S=\frac{\overline{S}}{2} \right\}.
$$ 
In this section we want to find the elements of $\mathcal{D}(S)$ with minimal genus.

Let $b$ be an odd element of $S$. The {\it numerical duplication} of a numerical semigroup $S$ with respect to a relative ideal $E$ and $b$ is defined in \cite{DS} as 
$$
\du = 2 \cdot S \cup (2 \cdot E + b)
$$
where we recall that $2 \cdot X=\{ 2x \ | \ x \in X \}$. This is a numerical semigroup if and only if $E+E+b \subseteq S$. 
Since $S$ is one half of $\du$, this construction is useful to study the doubles of $S$ 
(for another application of numerical duplication see \cite{BDS}). 

Set $K=\{k \ | \ f(S)-k \notin S \}$; we call $E$ {\it canonical ideal} of $S$ if $E=K+x$ for some $x \in \mathbb{Z}$. In \cite[Corollary 3.10]{S} it is proved that 
$$
\mathcal{D}(S)= \{ \du \ | \ E+E+b \subseteq S \text{ \ and } E \text{ is a canonical ideal of } S \}.
$$ 
Set $E=K+x$, then it is straightforward to check that 
$\du = S \! \Join^{b'} \! K$, where $b'=2x+b \in S$. Consequently we get

$$
\mathcal{D}(S)= \{ S \! \Join^b \! K \ | \ K+K+b \subseteq S \}.
$$
Let $S \! \Join^b \! K$ be an element of $\mathcal{D}(S)$. 
By construction and since $f(K)=f(S)$, its Frobenius number is the maximum between $2f(S)$ and $2f(S)+b$, that is $2f(S)+b$.
Since $S \! \Join^b \! K$ is symmetric, its genus is 
$$
g(S \! \Join^b \! K)=\frac{f(S \! \Join^b \! K)+1}{2}=\frac{2f(S)+b+1}{2}=f(S)+\frac{b+1}{2},
$$ 
therefore we get the minimum genus when $b$ is minimum. Hence we can state the following proposition:

\begin{prop} \label{symmetric}
There exists a unique numerical semigroup with minimal genus among the members of $\mathcal{D}(S)$ and
it is $S \! \Join^b \! K$, where $b$ is the minimum odd element of $S$ such that
$K+K+b \subseteq S$. Its genus is $f(S)+\frac{b+1}{2}$.
\end{prop}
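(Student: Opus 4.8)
The plan is to read the whole statement off the computation that precedes it. We have already established that every member of $\mathcal{D}(S)$ has the form $S\!\Join^{b}\!K$ for some odd $b\in S$ with $K+K+b\subseteq S$, and that such a semigroup, being symmetric with Frobenius number $2f(S)+b$, has genus $f(S)+\frac{b+1}{2}$. Since $b\mapsto f(S)+\frac{b+1}{2}$ is strictly increasing along the odd integers, minimizing the genus over $\mathcal{D}(S)$ is exactly minimizing $b$ over the set
$$
B=\{\, b \ \text{odd} \ | \ b\in S,\ K+K+b\subseteq S \,\}.
$$
Thus it suffices to show that $B$ has a least element $b$. Granting this, $S\!\Join^{b}\!K\in\mathcal{D}(S)$ has genus $f(S)+\frac{b+1}{2}$; and every member of $\mathcal{D}(S)$ equals $S\!\Join^{b'}\!K$ for some $b'\in B$, so it has genus $f(S)+\frac{b'+1}{2}\ge f(S)+\frac{b+1}{2}$, with equality if and only if $b'=b$, i.e. if and only if that member is $S\!\Join^{b}\!K$ itself. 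This yields minimality and uniqueness simultaneously.

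It remains to check that $B\neq\emptyset$; being a subset of $\mathbb{N}$, it then has a least element. One may simply invoke \cite{RG} or \cite{S}, where $\mathcal{D}(S)\neq\emptyset$ is shown. Alternatively, here is a direct argument: $K$ is bounded below, since for $k$ sufficiently negative $f(S)-k>f(S)$ and hence $f(S)-k\in S$, so $k\notin K$. Writing $m=-\min K\ge 0$, every element of $K+K$ is $\ge -2m$; hence if $b$ is an odd element of $S$ with $b>f(S)+2m$ — such elements exist because $S$ contains all sufficiently large integers — then every element of $K+K+b$ exceeds $f(S)$ and so lies in $S$, whence $b\in B$.

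I do not foresee any real obstacle here: the substance is entirely in the genus formula $g(S\!\Join^{b}\!K)=f(S)+\frac{b+1}{2}$ obtained above together with the trivial observation that it is strictly monotone in the discrete parameter $b$. The only step needing its own line is the nonemptiness of $B$, and even that is immediate from the cited results or from the boundedness below of $K$.
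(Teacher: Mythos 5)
Your proposal is correct and takes essentially the same route as the paper, which likewise reads the proposition off the preceding computation (every member of $\mathcal{D}(S)$ is $S\!\Join^{b}\!K$ with genus $f(S)+\frac{b+1}{2}$, so one minimizes $b$), and which settles nonemptiness/finiteness of the search by the same observation you make, namely that any odd $b\in S$ with $b>f(S)$ satisfies $K+K+b\subseteq S$ (indeed $\min K=0$, so your $m$ is $0$). Your explicit injectivity/uniqueness remark via the Frobenius number $2f(S)+b$ is a harmless elaboration of what the paper leaves implicit.
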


We notice that if $b>f(S)$, then $K+K+b \subseteq S$; this means that we have to check a finite number 
of elements to find the $b$ with the property of the above proposition.

\begin{ex} \rm
Consider the numerical semigroup $S=\{0,5,7,8,10,12 \rightarrow \}$. 
In this case $K=\{0,2,5,7,8,9,10,12 \rightarrow \}$. Since $2+2+5 \notin S$
and $0+2+7 \notin S$, we have $K+K+5 \nsubseteq S$ and $K+K+7 \nsubseteq S$,
while $K+K+13 \subseteq S$ because $13$ is greater than $f(S)$.
Hence the symmetric double of $S$ with minimal genus is
$S \! \Join^{13} \! \! K \! = \! \{0,10,13,14,16,17,20,23,24,26,27,28,29,30,31,32,33,34,36 \rightarrow \}$
that has genus $f(S)+\frac{b+1}{2}=18$. 
Note that the minimum genus of a double of $S$ is $g(S)+ \lceil \frac{f(S)}{2} \rceil=13$
and, according to the proof of Theorem \ref{main}, it is obtained by the semigroup 
$\{0,10,13,14,15,16,17,19,20,21,23 \rightarrow \}$.
\end{ex}

\noindent
There is a particular case in which we can avoid looking for the ``right'' $b$.

\begin{cor} \label{almost symmetric 2}
Let $S$ be an almost symmetric numerical semigroup. 
The element with minimal genus in the family $\mathcal{D}(S)$ is 
$S \! \Join^b \! K$, where $b$ is the minimum odd element of $S$.
Its genus is $f(S)+ \frac{b+1}{2}$.
\end{cor}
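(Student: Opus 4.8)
The plan is to deduce the statement from Proposition~\ref{symmetric}. That proposition identifies the unique minimal‑genus member of $\mathcal{D}(S)$ as $S \! \Join^b \! K$, where $b$ is the smallest odd element of $S$ satisfying $K+K+b \subseteq S$, and states that its genus is $f(S)+\frac{b+1}{2}$. Since $S$ omits only finitely many integers, it contains odd elements; let $b$ be its smallest odd element, so $b \geq 1$. It therefore suffices to show that, when $S$ is almost symmetric, this $b$ already satisfies $K+K+b \subseteq S$: then $b$ coincides with the element furnished by Proposition~\ref{symmetric}, and both the description of the semigroup and the genus formula follow at once.

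The key observation I would isolate is that $K \subseteq S \cup {\rm PF}(S)$. First, $S \subseteq K$: if $s \in S$ then $f(S)-s \notin S$, for otherwise $f(S)=s+(f(S)-s) \in S$, a contradiction. Second, if $k \in K \setminus S$ then $k \notin S$ and $f(S)-k \notin S$, i.e. $k \in {\rm L}(S)$, and ${\rm L}(S) \subseteq {\rm PF}(S)$ precisely because $S$ is almost symmetric. Hence every element of $K$ is either an element of $S$ or a pseudo‑Frobenius number of $S$.

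Next I would fix $k_1,k_2 \in K$ and show $k_1+k_2+b \in S$ by cases. If $k_1,k_2 \in S$, this is immediate from closure under addition. Otherwise one of them, which we may take to be $k_1$, lies in ${\rm PF}(S)$. I would first check that $k_2+b \in S \setminus \{0\}$: it lies in $S$ either as a sum of two elements of $S$ (if $k_2 \in S$) or because $b \in S \setminus \{0\}$ and $k_2 \in {\rm PF}(S)$ (if $k_2 \notin S$); and it is nonzero since $b \geq 1$. Consequently $k_1+(k_2+b) \in S$, because $k_1 \in {\rm PF}(S)$ and $k_2+b$ is a nonzero element of $S$. This establishes $K+K+b \subseteq S$ and finishes the argument.

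I do not expect a serious obstacle here — the proof is genuinely short — but two points must be handled with care. The step ``$k \in K \setminus S \Rightarrow k \in {\rm PF}(S)$'' uses exactly the almost‑symmetry hypothesis ${\rm L}(S) \subseteq {\rm PF}(S)$ and fails in general; and the hypothesis $b \neq 0$ is essential, since without it the inclusion $K+K+b \subseteq S$ can break down (for example $K+K$ itself need not be contained in $S$).
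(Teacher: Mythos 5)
Your proposal is correct and follows essentially the same route as the paper: reduce to Proposition~\ref{symmetric} and verify $K+K+b \subseteq S$ using that, by almost symmetry, $K = S \cup {\rm L}(S) \subseteq S \cup {\rm PF}(S)$ together with $b \in S \setminus \{0\}$. You merely spell out the case analysis that the paper leaves as ``very easy to check.''
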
 

\begin{proof}
It is enough to prove that $K+K+b \subseteq S$.
By definition $K=S \cup {\rm L}(S)$ and this implies $K=(S \cup {\rm PF}(S)) \setminus \{f(S)\}$
because ${\rm L}(S) = {\rm PF}(S) \setminus \{f(S) \}$, since $S$ is almost symmetric.
Therefore it is very easy to check that \\
$K+K+b \subseteq S$, since $b \in S \setminus \{0\}$.
\end{proof}

\begin{ex} \rm
Consider the almost symmetric numerical semigroup \\
$S \!= \!\{0,6,7,11,12,13,14,16 \rightarrow \}$.
Since $7$ is the minimum odd integer of $S$, the minimal genus
of a symmetric double of $S$ is $15+8/2=19$ and it is obtained by
$\{0,7,12,14,17,19,21,22,24,26,27,28,29,31,32,33,34,35,36,38 \rightarrow \}$.
\end{ex}

\section{The Frobenius number of the quotient}

In this section we study the Frobenius number of a quotient of some particular numerical semigroups.
In a particular case we have already found a formula in Corollary \ref{quoziente simmetrico}.

The next definition is due to Swanson, see \cite{Sw}. 

\begin{defin}
Let $d$ be a positive number. A numerical semigroup is said to be $d$-symmetric if for all integers $n \in \mathbb Z$, whenever $d$ divides $n$, either $n$ or $f(S)-n$ is in $S$.
\end{defin}

It is easy to see that a symmetric numerical semigroup is $d$-symmetric for any $d \in \mathbb{N}$ and that a $1$-symmetric semigroup is symmetric (see \cite[Proposition 4.4]{RG1}).

\begin{thm} \label{Frobenius quoziente}
Let $d \geq 2$ be an integer and let $S$ be a $d$-symmetric numerical semigroup. 
If $x$ is the smallest element of $S$ such that $x \equiv f(S) \ ({\rm mod} \ d)$, then
$$
f \left( \frac{S}{d} \right) = \frac{f(S)-x}{d}. 
$$ 
\end{thm}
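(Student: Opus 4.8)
The plan is to compute $f(S/d)$ by characterizing directly which integers fail to lie in $S/d$, using the $d$-symmetry hypothesis. Recall that $n \in S/d$ precisely when $dn \in S$. So $f(S/d)$ is the largest $n$ with $dn \notin S$, equivalently, writing $m = dn$, it is $\frac{1}{d}\max\{m \ | \ d \mid m, \ m \notin S\}$. Thus the first step is to identify the largest multiple of $d$ that is a gap of $S$.

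Next I would exploit $d$-symmetry: for every multiple $m$ of $d$, either $m \in S$ or $f(S) - m \in S$. Note $f(S) - m \equiv f(S) \pmod d$. Consider the set $G = \{m \ | \ d \mid m, \ m \notin S\}$; for each $m \in G$ we must have $f(S) - m \in S$. If $m$ is the largest such gap, then $f(S) - m$ is a \emph{small} element of $S$, and in fact I expect $m = f(S) - x$ where $x$ is the smallest element of $S$ congruent to $f(S)$ mod $d$; this would immediately give $f(S/d) = m/d = (f(S)-x)/d$. To make this precise, the key observations are: (a) $f(S) - x$ is a multiple of $d$ since $x \equiv f(S) \pmod d$; (b) $f(S) - x \notin S$, because otherwise $x + (f(S)-x) = f(S) \in S$, a contradiction — so $f(S) - x \in G$, giving the lower bound $f(S/d) \geq (f(S)-x)/d$; and (c) for the upper bound, if $m > f(S) - x$ is a multiple of $d$ with $m \notin S$, then by $d$-symmetry $f(S) - m \in S$, but $f(S) - m < x$ and $f(S) - m \equiv f(S) \pmod d$ and $f(S) - m \geq 0$ (we need to rule out negativity — but $m \le f(S)$ since $m \notin S$ means $m \leq f(S)$, as $f(S)$ is the largest gap), contradicting minimality of $x$ among elements of $S$ in that residue class.

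The one point that requires care — and which I expect to be the main obstacle — is the boundary case: ensuring $x$ exists and handling whether $f(S) - m$ could be negative or zero, and confirming $0$ is allowed as a candidate for $x$ when $d \mid f(S)$. Since $S$ contains all sufficiently large integers, it contains infinitely many elements in the residue class of $f(S)$ mod $d$, so $x$ is well-defined; and $x \leq f(S) + d$ roughly, so $f(S) - x$ could in principle be slightly negative only if $x > f(S)$, which happens exactly when $S$ has no element $\leq f(S)$ in that residue class — but then every multiple-of-$d$ integer $m$ with $f(S) - m$ in the right class and $\le f(S)$... one checks this forces $f(S/d)$ to be computed correctly still, possibly as a negative number or via the convention $f(\mathbb{N}) = -1$. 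I would dispose of this by noting $d$-symmetry applied to $m = 0$: either $0 \in S$ (always true) so no constraint, but applied to the largest gap multiple of $d$ it pins things down. Assembling (a), (b), (c) yields both inequalities and hence the equality $f(S/d) = (f(S)-x)/d$.
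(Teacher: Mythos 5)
Your proposal is correct and follows essentially the same route as the paper's proof: the lower bound comes from observing that $f(S)-x$ is a gap of $S$ divisible by $d$ (else $f(S)=x+(f(S)-x)\in S$), and the upper bound from applying $d$-symmetry to any larger gap that is a multiple of $d$ and contradicting the minimality of $x$, exactly as in the paper. Your worry about negativity is not really an obstacle, since $f(S)-m\in S$ is automatically a nonnegative element in the right residue class, and the extreme case $x>f(S)$ only occurs when $S/d=\mathbb{N}$, where the formula still holds under the convention $f(\mathbb{N})=-1$ (a point the paper also leaves implicit).
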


\begin{proof}
Since $x \in S, f(S)-x$ is a gap of $S$; therefore $\frac{f(S)-x}{d} \notin \frac{S}{d}$.
Suppose by contradiction that there exists a gap $y$ of $\frac{S}{d}$ greater than $\frac{f(S)-x}{d}$.
Then $dy$ is not in $S$ and this implies $f(S)-dy \in S$, since $S$ is $d$-symmetric. Consequently we have $x \leq f(S)-dy$ by definition of $x$, but this is a contradiction because $y > \frac{f(S)-x}{d}$, i.e. $x > f(S)-dy$.  
\end{proof}

\begin{ex} \rm
Consider the numerical semigroup $S = \{0,6,9,10,12,14 \rightarrow \}$ that is $3$-symmetric but not $4$-symmetric (and then not symmetric). By the previous theorem, we get
$$
f\left(\frac{S}{3} \right)=\frac{13-10}{3}=1,
$$ 
while for $\frac{S}{4}$ the formula gives again $1$ that is not the Frobenius number of $S/4$.
\end{ex}

We remember that a numerical semigroup is {\it pseudo-symmetric} if $f(S)$ is even and for any $n \notin S$ different from $f(S)/2$, one has $f(S)-n \in S$.

\begin{cor} \label{Frobenius symmetric}
Let $S$ be either a symmetric or a pseudo-symmetric numerical semigroup. 
If $x$ is the smallest element of $S$ such that $x \equiv f(S) \mod d$, then
$$
f \left( \frac{S}{d} \right) = \frac{f(S)-x}{d}
$$
for any integer $d \geq 2$.
\end{cor}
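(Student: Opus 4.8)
The statement is a corollary of Theorem~\ref{Frobenius quoziente}, so the only thing that needs to be verified is that both symmetric and pseudo-symmetric numerical semigroups are $d$-symmetric for every $d\geq 2$. For the symmetric case this is already recorded in the remark immediately before Theorem~\ref{Frobenius quoziente}: a symmetric semigroup is $d$-symmetric for any $d$, since the defining condition of $d$-symmetry only asks that \emph{some} multiples of $d$ (rather than all integers) satisfy the symmetry property, which is automatic when $S$ is symmetric. Hence Theorem~\ref{Frobenius quoziente} applies directly and gives the formula for symmetric $S$.

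For the pseudo-symmetric case, I would argue as follows. Let $S$ be pseudo-symmetric with Frobenius number $f=f(S)$, which is even. Let $n\in\Z$ with $d\mid n$; I must show that $n\in S$ or $f-n\in S$. If $n\notin S$, then by the definition of pseudo-symmetry either $n=f/2$ or $f-n\in S$. In the second case we are done, so suppose $n=f/2$. The point to check is that the bad case $n=f/2$ is harmless for the conclusion of Theorem~\ref{Frobenius quoziente}: indeed, if $d\mid f/2$, then $d\mid f$, so $x$ (the smallest element of $S$ congruent to $f$ modulo $d$) is actually the smallest element of $S$ divisible by $d$, and one checks that $f/2$ itself then cannot be the relevant obstruction — more precisely, re-examining the proof of Theorem~\ref{Frobenius quoziente}, the only place $d$-symmetry is used is to deduce $f-dy\in S$ from $dy\notin S$ for a gap $y>\frac{f-x}{d}$; I would verify that for such $y$ one has $dy\neq f/2$, so the pseudo-symmetric exception never arises. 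Alternatively, and more cleanly, I would first note that a pseudo-symmetric semigroup is almost symmetric with type $2$, and either invoke the literature fact that pseudo-symmetric semigroups are $d$-symmetric for all $d$ (analogous to \cite[Proposition~4.4]{RG1}), or simply observe that $f/2\notin S$ forces, via the pseudo-symmetry relation applied at $f/2$, nothing — so that whenever $d\mid n$ and $n\notin S$ with $n\neq f/2$ we get $f-n\in S$, and the case $n=f/2$ with $d\mid f/2$ is excluded from causing trouble in the proof of Theorem~\ref{Frobenius quoziente} because there $dy$ is a gap, hence $dy\neq f/2$ is not needed — what is needed is only $f-dy\in S$, which the pseudo-symmetry gives unless $dy=f/2$; but $dy=f/2$ combined with $dy$ a gap is consistent, so I do need the congruence observation.

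Concretely, the cleanest route: recall that $S$ pseudo-symmetric means $\N\setminus S = \{f/2\}\cup\{\,g\in\N\setminus S : f-g\in S\,\}$ (disjoint), so for any gap $g\neq f/2$ we have $f-g\in S$. Now rerun the proof of Theorem~\ref{Frobenius quoziente} verbatim: $x$ is the smallest element of $S$ with $x\equiv f\pmod d$, the integer $\frac{f-x}{d}$ is a gap of $S/d$ since $f-x$ is a gap of $S$, and if $y>\frac{f-x}{d}$ were a gap of $S/d$ then $dy$ is a gap of $S$; if $dy\neq f/2$ then $f-dy\in S$ and we reach the same contradiction $x\le f-dy<x$; and if $dy=f/2$ then $d\mid f$ and $dy<f=f\cdot 1\le$ (any multiple of $d$ that is $\ge f$), in particular $dy<f$, and since $x\le f$ with $x\in S$ while $\frac{f}{2}\notin S$ we would need $x\neq f/2$, yet $dy=f/2$ with $d\mid dy$ and $dy\equiv f\pmod d$ (as $d\mid f$) contradicts minimality of $x$ only if $f/2\in S$, which is false — so the case $dy=f/2$ cannot occur for a gap $y$ with $y>\frac{f-x}{d}$ after all, because $\frac{f/2}{d}=\frac{f-x}{d}$ would force $x=f/2\notin S$. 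This closes the argument.

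\textbf{Main obstacle.} The only subtlety is the pseudo-symmetric exceptional value $f/2$: one must make sure it never interferes with the chain of implications in the proof of Theorem~\ref{Frobenius quoziente}. Once one observes that $d\mid f/2$ implies $d\mid f$ and hence that $f/2$, were it the obstruction, would have to equal $\frac{f-x}{d}\cdot d$ forcing $x=f/2$ — impossible since $f/2\notin S$ — the exception is eliminated and the proof goes through unchanged. Everything else is a direct citation of Theorem~\ref{Frobenius quoziente} and the already-noted fact that symmetric semigroups are $d$-symmetric.
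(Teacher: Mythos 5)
Your reduction is sound in outline and in fact mirrors the paper's: the symmetric case is an immediate application of Theorem \ref{Frobenius quoziente}, and the whole content of the corollary is the pseudo-symmetric exceptional value $f/2$. But the step where you dispose of that exception is garbled, and as written it does not prove what is needed. If $y$ is a gap of $S/d$ with $y > \frac{f-x}{d}$ and $dy = f/2$, the hypothesis gives the strict inequality $f/2 > f-x$, not the equality $\frac{f/2}{d} = \frac{f-x}{d}$ that you invoke; so nothing ``forces $x=f/2$''. You also assert $x \le f$ without justification, and that is false in general (for $S=\langle 2,5\rangle$ and $d=2$ one has $f=3$ and $x=5$). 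The observation that actually closes the case --- and which you circle around without ever stating --- is that $d \mid dy = f/2$ gives $d \mid f$, hence $x=0$, because $0 \in S$ and $0 \equiv f \pmod d$. Then $\frac{f-x}{d} = \frac{f}{d}$, while $y = \frac{f}{2d} < \frac{f}{d}$, contradicting $y > \frac{f-x}{d}$; so $dy = f/2$ cannot occur and the proof of Theorem \ref{Frobenius quoziente} runs unchanged.

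With that one-line fix your argument is correct and is essentially the paper's. The paper observes that a pseudo-symmetric semigroup is $d$-symmetric precisely when $2d$ does not divide $f(S)$, and in the remaining case $d \mid f(S)$ it notes $x=0$ and checks directly that $f(S)/d$ is the largest gap of $S/d$ (any $y > f(S)/d$ has $dy > f(S)$, hence $dy \in S$). Both routes hinge on the same trivial but essential fact, namely $x=0$ whenever $d \mid f(S)$, which your write-up never makes explicit.
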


\begin{proof}
We note that a symmetric numerical semigroup is $d$-symmetric, 
while a pseudo-symmetric numerical semigroup is $d$-symmetric if and only if $2d$ 
does not divide $f(S)$. If $f(S)$ is a multiple of $d$ we have $x=0$ and it is trivial to note that
$f(S)/d$ is the maximum gap of $\frac{S}{d}$.
\end{proof}

Since symmetric and pseudo-symmetric numerical semigroups are the almost symmetric numerical semigroups with type one and two respectively, it is natural to ask if the previous corollary holds also for almost symmetric numerical semigroups.
The next example shows that this is not true.

\begin{ex} \rm
Let $d \geq 2$ be an integer and consider the numerical semigroup
$S=\{0, d+2 \rightarrow \}$.
We have $\frac{S}{d}=\{0,2 \rightarrow \}$ but the formula of the previous corollary would give 
$$f\left( \frac{S}{d} \right)=\frac{d+1-(2d+1)}{d}=-1$$
that is false.
\end{ex}

If $a_1, \dots, a_e \in \mathbb{N}$ we set 
$$
\langle a_1, \dots, a_e \rangle = \{\lambda_1 a_1 + \dots +\lambda_e a_e \ | \ \lambda_1, \dots, \lambda_e \in \mathbb{N}\};
$$
this is a numerical semigroup if and only if $\gcd(a_1, \dots, a_e)=1$. 
We say that $a_1, \dots, a_e$ is a system of generators of $\langle a_1, \dots, a_e \rangle$; it is well-known that there exists a unique minimal system of generators of a numerical semigroup.

Unfortunately, in general, it is not easy to find the element $x$ of Theorem \ref{Frobenius quoziente}, but we can say more in a particular case.

\begin{cor} \label{Frobenius}
Let $S$ be a symmetric numerical semigroup. Then
$$
f \left( \frac{S}{2} \right) = \frac{f(S)-x}{2},
$$
where $x$ is the smallest odd generator of $S$.
\end{cor}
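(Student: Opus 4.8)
The plan is to deduce this from Theorem \ref{Frobenius quoziente} by identifying the element $x$ that appears there. A symmetric numerical semigroup is $d$-symmetric for every $d$; in particular $S$ is $2$-symmetric, so Theorem \ref{Frobenius quoziente} with $d=2$ gives
$$
f\left(\frac{S}{2}\right) = \frac{f(S)-x}{2},
$$
where now $x$ is the smallest element of $S$ with $x \equiv f(S) \pmod 2$. Hence it suffices to show that this $x$ coincides with the smallest odd generator of $S$.

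First I would record that $f(S)$ is odd: this is immediate from the characterization $g(S) = \frac{f(S)+1}{2}$ of symmetric semigroups recalled at the start of Section 3 (the degenerate case $S=\mathbb{N}$, where $f(S)=-1$ and the unique generator $1$ is odd, is checked directly and the formula still holds). Therefore $x$ is simply the smallest odd element of $S$, and such an element exists since $S$ contains every integer larger than $f(S)$. Every generator of $S$ belongs to $S$, so the smallest odd element of $S$ is at most the smallest odd generator of $S$; the content of the statement is the reverse inequality, and for this it is enough to prove that $x$ itself is a minimal generator of $S$.

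Suppose $x$ were not a minimal generator. Then $x = a+b$ with $a,b \in S\setminus\{0\}$, so $0 < a,b < x$, and since $x$ is odd, exactly one of $a,b$, say $a$, is odd. But then $a$ is an odd element of $S$ strictly smaller than $x$, contradicting the minimality of $x$. Hence $x$ is a minimal generator, the smallest odd generator of $S$ is at most $x$, and combining the two inequalities it equals $x$; substituting into the displayed formula completes the argument. I do not expect any genuine obstacle here: the only points requiring a little care are the degenerate case $S=\mathbb{N}$ and the standard fact that a minimal generator of a numerical semigroup cannot be written as a sum of two nonzero elements of the semigroup, which is exactly what powers the parity argument.
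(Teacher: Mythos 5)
Your proof is correct and takes essentially the same route as the paper: it deduces the formula from Theorem \ref{Frobenius quoziente} (via the observation that a symmetric semigroup is $2$-symmetric, which is exactly Corollary \ref{Frobenius symmetric}) together with the oddness of $f(S)$ for symmetric $S$. The only difference is that you make explicit the parity argument showing the smallest odd element of $S$ is a minimal generator, hence equals the smallest odd generator, a step the paper leaves implicit.
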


\begin{proof}
The thesis follows from the previous corollary because in a symmetric numerical semigroup the Frobenius number is odd
(see \cite[Proposition 4.4]{RG1}).
\end{proof}

We note that the previous corollary holds for any system of generators.
The next corollary was proved in a different way in \cite[Proposition 7]{R}. 

\begin{cor}
Let $a<b$ be two positive integers with $\gcd(a,b)=1$. Then
$$
f\left(\frac{\langle a,b \rangle}{2}\right)= 
\begin{cases}
    \frac{ab-b}{2} - a  & {\rm \ if} \ a \ {\rm is \ odd} , \\
    \frac{ab-a}{2} - b  & {\rm \ if} \ a \ {\rm is \ even} .
  \end{cases}
$$
\end{cor}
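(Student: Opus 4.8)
The plan is to deduce the statement directly from Corollary \ref{Frobenius}, using only the classical facts that $S=\langle a,b\rangle$ is a symmetric numerical semigroup (a numerical semigroup generated by two elements is a complete intersection, hence symmetric) and that its Frobenius number is $f(S)=ab-a-b$ by Sylvester's formula. Since the remark following Corollary \ref{Frobenius} allows us to apply that corollary to an arbitrary system of generators, I would use the system $\{a,b\}$ and write
$$
f\!\left(\frac{S}{2}\right)=\frac{f(S)-x}{2}=\frac{ab-a-b-x}{2},
$$
where $x$ is the smallest odd element of $\{a,b\}$.

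Next I would bring in the hypothesis $\gcd(a,b)=1$, which rules out $a$ and $b$ being both even, and split into two cases. If $a$ is odd, then, since $a<b$, the smallest odd element of $\{a,b\}$ is $a$, so $x=a$ and the display above becomes $\tfrac{ab-2a-b}{2}=\tfrac{ab-b}{2}-a$. If $a$ is even, then $b$ is necessarily odd, so the only odd element of $\{a,b\}$ is $b$, whence $x=b$ and the display becomes $\tfrac{ab-a-2b}{2}=\tfrac{ab-a}{2}-b$. These are precisely the two branches in the statement.

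I do not anticipate any real obstacle; the argument is essentially a one-line substitution. The only point worth a brief justification is why the quantity $x$ of Corollary \ref{Frobenius}, described there as the smallest odd generator, may legitimately be read off from $\{a,b\}$: if an odd element $s=\lambda a+\mu b$ of $S$ is expressed through the generators, then at least one odd generator must occur with an odd coefficient and is therefore $\le s$, while conversely every odd generator lies in $S$; hence the minimum odd element of $S$ coincides with the minimum odd element of any generating set. With this observation in hand, the displayed computations complete the proof.
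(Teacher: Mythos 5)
Your proposal is correct and follows essentially the same route as the paper: apply Corollary \ref{Frobenius} to the symmetric semigroup $\langle a,b\rangle$ together with Sylvester's formula $f(\langle a,b\rangle)=ab-a-b$, taking $x=a$ when $a$ is odd and $x=b$ when $a$ is even (forced by $\gcd(a,b)=1$). Your extra remark identifying the smallest odd element of $S$ with the smallest odd generator is a harmless elaboration of the paper's observation that the corollary works for any system of generators.
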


\begin{proof}
It is well-known that $\langle a,b \rangle$ is symmetric, see Corollary 4.17 of \cite{RG1}, 
moreover Sylvester proved in \cite{Sy} that $f(\langle a,b \rangle)=ab-a-b$ (it is possible to find this result also in \cite[Proposition 2.13]{RG1}).
If $a$ is odd, we have $x=a$ and then
$$
f\left(\frac{\langle a,b \rangle}{2}\right)=\frac{f(\langle a,b \rangle)-a}{2}=\frac{ab-a-b-a}{2}=\frac{ab-b}{2}-a.
$$
If $a$ is even, then $b$ is odd because $\gcd(a,b)=1$ and consequently $x=b$. 
The second formula can be found with the same argument of the first one.  
\end{proof}

In \cite{RG1} and in \cite{DGR} the authors ask for a formula for $f\left(\frac{\langle a,b \rangle}{d}\right)$, at least if $b=a+1$.
To this aim it is possible to use Corollary \ref{Frobenius symmetric}, but can be difficult to find $x$. 
In the next corollary we give a formula for $d=5$.

\begin{cor}
Let $a$ be a positive integer. Then
 
$$
f\left(\frac{\langle a,a+1 \rangle}{5}\right)= 
\begin{cases}
    \frac{a^2}{5} - a - 1  & {\rm \ if} \ a \equiv 0  \mod 5,  \\
    \frac{a^2-3a-3}{5} & {\rm \ if} \ a \equiv 1,2  \mod 5, \\
    \frac{a^2-a-1}{5} & {\rm \ if} \ a \equiv 3  \mod 5, \\
    \frac{a^2-1}{5} - a  & {\rm \ if} \ a \equiv 4  \mod 5.
  \end{cases}
$$
\end{cor}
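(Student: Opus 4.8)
The plan is to apply Corollary \ref{Frobenius symmetric} with $d=5$ and $S=\langle a,a+1\rangle$, which is symmetric since it is generated by two coprime integers (see \cite[Corollary 4.17]{RG1}). This reduces the computation of $f(S/5)$ to two ingredients: the Frobenius number $f(S)$ and the smallest element $x$ of $S$ with $x\equiv f(S)\ (\mathrm{mod}\ 5)$. By Sylvester's formula $f(\langle a,b\rangle)=ab-a-b$ (see \cite{Sy}) we have $f(S)=a(a+1)-a-(a+1)=a^{2}-a-1$, and reducing modulo $5$ one checks that $f(S)\equiv 4,4,1,0,1\ (\mathrm{mod}\ 5)$ according as $a\equiv 0,1,2,3,4\ (\mathrm{mod}\ 5)$.

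To locate $x$, I would use the explicit description of the elements of $S$: every element of $\langle a,a+1\rangle$ has the form $(\lambda+\mu)a+\mu$ with $\lambda,\mu\in\mathbb{N}$, so $S=\bigcup_{n\ge 0}\{na,na+1,\dots,na+n\}$. In particular the elements of $S$ lying below $5a$ are precisely the integers in $\{0\}\cup[a,a+1]\cup[2a,2a+2]\cup[3a,3a+3]\cup[4a,4a+4]$, and every element of $S$ outside this range is at least $5a$. Running through the five residue classes of $a$ modulo $5$ and inspecting which residues these integers realize, one finds $x=4a+4$ when $a\equiv 0$, $x=2a+2$ when $a\equiv 1$ or $a\equiv 2$, $x=0$ when $a\equiv 3$ (here $5$ divides $f(S)$, so $x=0$ as in the proof of Corollary \ref{Frobenius symmetric}), and $x=4a$ when $a\equiv 4$. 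Substituting these values together with $f(S)=a^{2}-a-1$ into $f(S/5)=\bigl(f(S)-x\bigr)/5$ gives the four cases of the statement after a short simplification.

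The only delicate point is the bookkeeping in the case analysis: for each residue class of $a$ one must verify that no element of $S$ smaller than the claimed $x$ is congruent to $f(S)$ modulo $5$. This follows from the block description above, since in every case the block containing $x$ has index at most $4$, and one checks residue by residue that the earlier blocks miss the class of $f(S)$; the small values of $a$ (where $S$ may degenerate, for instance $S=\mathbb{N}$ when $a=1$) are covered by the same formulas and can be confirmed directly. I expect this verification to be the main --- and essentially the only --- obstacle, the rest being a routine appeal to Corollary \ref{Frobenius symmetric} and Sylvester's theorem.
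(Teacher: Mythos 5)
Your proposal is correct and follows essentially the same route as the paper: apply Corollary \ref{Frobenius symmetric} with $d=5$, use Sylvester's formula $f(\langle a,a+1\rangle)=a^2-a-1$, and determine the minimal $x\in S$ congruent to $f(S)$ modulo $5$ case by case, obtaining the same values $4(a+1)$, $2(a+1)$, $0$, $4a$ as the paper. Your block description of $\langle a,a+1\rangle$ merely spells out the verification the paper leaves as ``easy to see.''
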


\begin{proof}
By Sylvester's formula the Frobenius number of $\langle a, a+1 \rangle$ is $a^2-a-1$. 
We only need to find the minimum $x \in S$ such that $x \equiv a^2-a-1 \mod 5$ and it is easy to see that
$$
\begin{array}{ll}
x=4(a+1) & {\rm \ if} \ a \equiv 0  \mod 5, \\
x=2(a+1) & {\rm \ if} \ a \equiv 1,2  \mod 5, \\
x=0 & {\rm \ if} \ a \equiv 3  \mod 5, \\
x=4a & {\rm \ if} \ a \equiv 4  \mod 5.
\end{array}
$$
\end{proof}




Clearly Corollary \ref{Frobenius} gives a formula for $f\left(\frac{S}{2} \right)$, provided that we know a formula for $f(S)$. In the next corollary we collect some cases using the formulas found in \cite[Corollary 3.11]{H}, \cite[Theorem 4]{F}, and \cite[Th\'{e}or\`{e}me 2.3]{BC} respectively; for the first and the third case see also Remark 10.7 and Proposition 9.15 of \cite{RG1}.

Let $T=\langle n_1, \dots, n_e \rangle$ be a numerical semigroup. We set
$$
x=\min \{n_i \ | \ n_i \text{ is odd} \},
$$
$$
c_i= \min \{k \in \mathbb{N}\setminus \{0\} \ | \ kn_i \in \langle n_1, \dots,n_{i-1}, n_{i+1}, \dots, n_e \rangle \},
$$
$$
c_i n_i = \sum_{j \neq i} r_{ij}n_j.
$$

\begin{cor}
1) Let $S$ be a numerical semigroup with three minimal generators. It is symmetric if and only if
$S= \langle am_1, am_2, bm_1 + cm_2 \rangle$, where $a,b,c,m_1,m_2$ are natural numbers such that $a \geq 2, b+c \geq 2, \gcd (m_1,m_2)=1$, and $\gcd(a,bm_1+cm_2)=1$. In this case 
$$
f\left(\frac{S}{2}\right) = \frac{a(m_1 m_2 - m_1 - m_2)+(a-1)(bm_1 + cm_2)-x}{2};
$$   
2) Let $S=\langle n_1, \dots, n_4 \rangle$ be a symmetric numerical semigroup that is not complete intersection, then
$$
f\left(\frac{S}{2}\right)= \frac{n_2 c_2 + n_3c_3 + n_4r_{14} - (n_1+n_2+n_3+n_4+x)}{2};
$$     
3) If $S$ is a free numerical semigroup for the arrangement of its minimal generators $\{n_1, \dots, n_e \}$, then
$$
f\left(\frac{S}{2}\right)=\frac{(c_2 - 1)n_2 + \dots + (c_e -1)n_e -n_1 - x}{2}.
$$ 
\end{cor}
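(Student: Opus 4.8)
All three formulas follow from Corollary \ref{Frobenius} by substitution, once the appropriate classical results on symmetry and on the Frobenius number are quoted; the plan is therefore simply to assemble these ingredients case by case. The only preliminary remark that deserves to be made is that the quantity $x=\min\{n_i \mid n_i \text{ is odd}\}$ is always well-defined: the minimal generators of a numerical semigroup have greatest common divisor $1$, so they cannot all be even. Moreover, as observed right after Corollary \ref{Frobenius}, that corollary may be applied with the minimal system of generators, which is the one used in the statement.

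For part 1) I would invoke Herzog's description \cite[Corollary 3.11]{H} (see also \cite[Remark 10.7]{RG1}): a numerical semigroup with three minimal generators is symmetric exactly when it can be written as $\langle am_1, am_2, bm_1+cm_2 \rangle$ under the stated arithmetic conditions, and in that case $f(S)=a(m_1m_2-m_1-m_2)+(a-1)(bm_1+cm_2)$. Inserting this expression into $f(S/2)=(f(S)-x)/2$, which is the content of Corollary \ref{Frobenius}, gives the first formula.

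For part 2) I would use the formula of Fr\"oberg \cite[Theorem 4]{F} for the Frobenius number of a symmetric numerical semigroup on four minimal generators that is not a complete intersection, namely $f(S)=n_2c_2+n_3c_3+n_4r_{14}-(n_1+n_2+n_3+n_4)$, with $c_i$ and $r_{ij}$ as defined just before the statement; substituting again into Corollary \ref{Frobenius} produces the second formula. For part 3), recall that a free numerical semigroup, with respect to a prescribed arrangement of its minimal generators, is a complete intersection and hence symmetric, so Corollary \ref{Frobenius} is available; by \cite[Th\'eor\`eme 2.3]{BC} (see also \cite[Proposition 9.15]{RG1}) its Frobenius number is $(c_2-1)n_2+\cdots+(c_e-1)n_e-n_1$, and one final substitution yields the third formula.

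There is no serious obstacle here: the mathematical content of the corollary lies entirely in the cited Frobenius-number formulas, and the ``proof'' is essentially bookkeeping. The step requiring the most attention is making sure that the ordering conventions on the generators and the precise meaning of $c_i$ and $r_{ij}$ in the sources \cite{H}, \cite{F}, \cite{BC} agree with those fixed in the statement, so that the substitution into Corollary \ref{Frobenius} is legitimate; but no new idea is needed.
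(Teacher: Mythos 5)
Your proposal is correct and coincides with the paper's own (implicit) argument: the paper proves this corollary simply by quoting the Frobenius-number formulas of Herzog, Fr\"oberg, and Bertin--Carbonne for the three families (together with the relevant symmetry facts) and substituting them into Corollary \ref{Frobenius}, exactly as you do. Your additional remarks that $x$ is well-defined and that Corollary \ref{Frobenius} applies to any system of generators are the same points the paper records just before the statement.
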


\vspace{1em}

\noindent \textbf{Acknowledgments.} The author thanks Pedro Garc\'ia-S\'anchez for his careful reading of an earlier version of the paper.

\end{document}